\tikzset{
  symbol/.style={
    draw=none,
    every to/.append style={
      edge node={node [sloped, allow upside down, auto=false]{$#1$}}}
  }
}
\newtheorem{theorem}{Theorem}[section]
\newtheorem{corollary}[theorem]{Corollary}
\newtheorem{lemma}[theorem]{Lemma}
\newtheorem{definition}[theorem]{Definition}
\begin{document}
\title{Floer potentials, cluster algebras and quiver representations}
\author{Peter Albers, Maria Bertozzi, Markus Reineke}
\address{Peter Albers:\newline Universität Heidelberg, Institut für Mathematik, Im Neuenheimer Feld 205, 69120 Heidelberg, Germany}
\email{palbers@mathi.uni-heidelberg.de}
\address{Maria Bertozzi:\newline Fakultät für Mathematik, Ruhr-Universität Bochum, Universitätsstraße 150, 44780 Bochum, Germany}
\email{maria.bertozzi@rub.de}
\address{Markus Reineke:\newline Fakultät für Mathematik, Ruhr-Universität Bochum, Universitätsstraße 150, 44780 Bochum, Germany}
\email{markus.reineke@rub.de}

\begin{abstract} We use cluster algebras to interpret Floer potentials of monotone Lagrangian tori in toric del Pezzo surfaces as cluster characters of quiver representations (see Theorem \ref{main} for the precise statement). 
\end{abstract}
\maketitle
\parindent0pt

\section{Introduction}

In the seminal work \cite{V}, R.~Vianna constructed infinitely many pairwise non Hamiltonian isotopic monotone Lagrangian tori in the complex projective plane. These tori are naturally indexed by Markov triples (more precisely, by the triple of squares), and were constructed inductively in \cite{V2} by a geometric mutation procedure along the Markov tree. To distinguish these tori, Vianna uses information on counts of Maslov index $2$ holomorphic disks contained in them. \\[1ex]
This construction was further conceptualized in \cite{PT}, where it is put into a context of cluster-like mutation operations originating in \cite{CMG}. Vianna's mutation operation is extended to mutations of so-called Lagrangian seeds in del Pezzo surfaces (similar concepts are the arboreal Lagrangian skeleta of \cite{N} and the compressing systems of \cite{CW}), the main result being a wall-crossing formula, stating that the Floer potentials of mutated Lagrangian tori are related by an explicit algebraic mutation rule for Laurent polynomials (see Sections \ref{lgp}, \ref{fp} for a summary).\\[1ex]
A formally similar mutation invariance is known in the representation-theoretic approach to cluster algebras \cite{DWZ2}, to be recalled in Sections \ref{ca}, \ref{rqp}. Namely, the $F$-polynomials of representations of quivers with potentials (or more precisely the cluster characters), encoding Euler characteristics of Grassmannians of subrepresentations, are related by the mutation rule in cluster algebras when the representations are mutated.\\[1ex]
This formal similarity, and experiments for the first three Vianna tori, led the authors to the prediction that, to any of the monotone Lagrangian tori in toric del Pezzo surfaces constructed in \cite{PT}, one can associate a representation of a quiver with potential, whose cluster character corresponds naturally to the Floer potential of the torus.\\[1ex]
Indeed, this turns out to be feasible. Inspired by the setup of \cite{GHK}, we construct (see Section \ref{comparison}) a comparison map from two-variable Laurent polynomials to cluster algebras, which is compatible with the two algebraic mutation rules mentioned above. We state and prove an appropriate version of mutation invariance of cluster characters in Section \ref{rqp}. \\[1ex]
After realizing the ``initial'' Landau-Ginzburg seeds of \cite{PT} as cluster characters in Section \ref{fpfp}, the compatibilities of the various mutations then almost directly imply the main result Theorem \ref{main}, stating that indeed all relevant Floer potentials are cluster characters of naturally constructed representations.\\[2ex]
{\bf Acknowledgments:} This work was initiated at the workshop ``Interplay betweeen symplectic geometry and cluster theory'' at Heidelberg in January 2023. The authors would like to thank all organizers, speakers and participants for the inspiring atmosphere. M. R. would like to thank B.~de Laporte, D.~Kaufman, B.~Keller,~D. Labardini-Fragoso and P.~Plamondon for many clarifying discussions on mutation and cluster characters. Thanks to an anonymous referee for suggesting several improvements to the presentation. The work of all authors is supported by the DFG CRC-TRR 191 ``Symplectic structures in geometry, algebra and dynamics'' (281071066).

\section{Recollections}
\subsection{Landau-Ginzburg potentials}\label{lgp}
Here we follow closely the presentation in \cite[Section 4]{PT}. For real numbers $a$ we define $[a]_+=\max(a,0)$, and make frequent use of the identity $$a=[a]_+-[-a]_+.$$ We consider integral primitive vectors $v\in\mathbb{Z}^2$. For $v=(a,b)$ we write $$v^\perp=(b,-a).$$ Given another such vector $w=(c,d)$, we consider the standard scalar product $$(v,w)=ac+bd,$$ as well as the symplectic form $$\{v,w\}=ad-bc$$ and note that $$\{v,w\}=-(v^\perp,w).$$ The vector $v=(a,b)$ defines a mutation map $\mu_v$ on rational functions in $\mathbb{C}(z_1,z_2)$ by
$$\mu_v(F)(z_1,z_2)=F(z_1(1+z_1^bz_2^{-a})^{-a},z_2(1+z_1^bz_2^{-a})^{-b})$$
(this is the cluster $\mathcal{X}$-mutation in the sense of \cite{GHK}). 
On monomials $z^w=z_1^cz_2^d$ we thus have
$$\mu_v(z^w)=z^w(1+z^{v^\perp})^{-(v,w)}.$$ As a tropical version, we define a mutation map on vectors by 
$$\mu_vw=w+[\{w,v\}]_+v.$$

\begin{definition}\label{deflgseed} A {\it seed} is a collection $${\bf s}=(W,v_1,\ldots,v_n)$$ consisting of a Laurent polynomial $W\in\mathbb{C}[z_1^\pm,z_2^\pm]$, called a {\it potential}, and pairwise different primitive vectors $v_1,\ldots,v_n\in\mathbb{Z}^2$, called {\it directions}. The seed ${\bf s}$ is called a {\it Landau-Ginzburg (LG) seed} if $\mu_{v_i}W$ is again a Laurent polynomial for all $i=1,\ldots,n$. In this case, we define $\mu_i^S{\bf s}=\mu_i{\bf s}$, the seed mutation of ${\bf s}$ in the direction $i=1,\ldots,n$, as the seed $$(\mu_{v_i}W,v_1',\ldots,v_n'),$$
where
$$v_i'=-v_i,\; v_j'=\mu_{v_i}v_j,\, j\not=i.$$
\end{definition}

Note that the operation of mutation of LG seeds is not involutive, since $$\mu_{-v}\mu_vw=w+\{w,v\}v$$ defines a transvection, and similarly
$$\mu_{-v}(\mu_v(z^w))=\mu_{-v}(z^w(1+z^{v^\perp})^{-(v,w)})=$$
$$=z^w(1+z^{-v^\perp})^{(v,w)}(1+z^{v^\perp})^{-(v,w)}=z^{w-(v^\perp,w)v^\perp}=z^{w+\{v,w\}v^\perp}.$$

\begin{theorem}\cite[Corollary 3.1]{CMG} Any mutation of an LG seed is again an LG seed.
\end{theorem}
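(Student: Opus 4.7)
The strategy is to verify directly that the mutated seed $(\mu_{v_i} W, v_1', \ldots, v_n')$ satisfies the LG condition, namely that $\mu_{v_j'}(\mu_{v_i} W)$ is a Laurent polynomial for every $j \in \{1, \ldots, n\}$. The proof splits into two cases according to whether $j = i$ or $j \neq i$.

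For $j = i$ one has $v_i' = -v_i$, and the preamble already performs the essential computation $\mu_{-v_i} \mu_{v_i}(z^w) = z^{w + \{v_i, w\} v_i^\perp}$. The map $w \mapsto w + \{v_i, w\} v_i^\perp$ is a $\mathbb{Z}$-linear automorphism of $\mathbb{Z}^2$ (a symplectic transvection, in particular of determinant $1$), so it induces a ring automorphism of $\mathbb{C}[z_1^\pm, z_2^\pm]$. Applying it to the Laurent polynomial $W$ yields a Laurent polynomial, completing this case.

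For $j \neq i$ one has $v_j' = v_j + [\{v_j, v_i\}]_+ v_i$, and the claim is that $\mu_{v_j'}(\mu_{v_i} W)$ is again a Laurent polynomial. My approach would be to write $W = \sum_u c_u z^u$, substitute $\mu_{v_i} W = \sum_u c_u z^u (1 + z^{v_i^\perp})^{-(v_i, u)}$, and apply $\mu_{v_j'}$ termwise. This produces a sum of terms involving factors $(1 + z^{v_i^\perp})^{\pm}$ and $(1 + z^{(v_j')^\perp})^{\pm}$. The key algebraic step is to derive a commutation-type identity of the form $\mu_{v_j'} \circ \mu_{v_i} = \mu_{v_i} \circ \Phi_{v_i, v_j}$, where $\Phi_{v_i, v_j}$ differs from $\mu_{v_j}$ by a monomial substitution; combined with the hypothesis that $\mu_{v_j} W$ is Laurent and the automorphism argument from the $j = i$ case, this reduces the desired conclusion to a statement about the original seed. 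An alternative, more geometric route is to translate the LG condition at a direction $v$ into a divisibility condition on the "slices" $\sum_{(v,u) = k} c_u z^u$ of the Newton polygon of $W$ by powers of $(1 + z^{v^\perp})$, and then verify directly that this divisibility is inherited by $\mu_{v_i} W$ in each of the mutated directions $v_j'$.

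The main obstacle is the case $j \neq i$. A sign analysis on $\{v_j, v_i\}$ is unavoidable because both the tropical formula $\mu_{v_i} v_j = v_j + [\{v_j, v_i\}]_+ v_i$ and the algebraic formula for $\mu_{v_i}$ involve sign-dependent exponents, and in each sign regime the exponents of the propagating $(1 + z^{\cdot})$-factors must be tracked and matched for cancellation. The underlying algebraic statement is the compatibility of the cluster $\mathcal{X}$-transformation with tropical mutation of directions; this is a direct but intricate verification, and it is precisely the content of the cited Corollary 3.1 of Cruz Morales--Galkin.
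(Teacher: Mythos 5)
The paper offers no proof of this statement---it is imported verbatim from \cite[Corollary 3.1]{CMG}---so the only question is whether your sketch would stand on its own. Your case $j=i$ does: the preamble's computation shows $\mu_{-v_i}\circ\mu_{v_i}$ is pullback along the monomial automorphism $z^w\mapsto z^{w+\{v_i,w\}v_i^\perp}$ of the torus, so $\mu_{v_i'}(\mu_{v_i}W)$ is $W$ after an invertible monomial substitution and is therefore Laurent.

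The case $j\neq i$ is the entire content of the theorem, and the ``key algebraic step'' you propose there fails. An identity $\mu_{v_j'}\circ\mu_{v_i}=\mu_{v_i}\circ\Phi_{v_i,v_j}$ with $\Phi_{v_i,v_j}$ equal to $\mu_{v_j}$ up to a monomial substitution would be a two-term relation between the cluster $\mathcal{X}$-transformations attached to $v_i$ and $v_j$. Such relations exist only in the pentagon regime $|\{v_i,v_j\}|=1$; for $|\{v_i,v_j\}|\geq 2$ these transformations generate an essentially free group (equivalently, the associated scattering diagram acquires infinitely many walls), so no identity with finitely many prescribed factors of this shape can hold---and the seeds in this paper have $|\{v_i,v_j\}|$ as large as $3$ (e.g.\ $\mathbb{C}P^2$). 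In particular, Laurentness of $\mu_{v_j'}(\mu_{v_i}W)$ cannot be reduced to Laurentness of $\mu_{v_j}W$ alone by a change of variables; the statement genuinely uses all the hypotheses $W,\mu_{v_1}W,\ldots,\mu_{v_n}W\in\mathbb{C}[z_1^\pm,z_2^\pm]$ at once. The actual argument in \cite{CMG} follows the upper-bound strategy of \cite{BFZ3}: one proves that the intersection of the Laurent rings attached to a seed and its one-step mutations is itself mutation-invariant, after reducing to explicit two-direction computations. Your ``alternative geometric route''---characterizing the LG condition at $v$ by divisibility of the slices $\sum_{(v,u)=k}c_uz^u$ by powers of $(1+z^{v^\perp})$---is close in spirit to those computations, but you do not carry out the inheritance step, which is where all the work lies. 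As written, your argument establishes only the direction $j=i$ and defers the rest back to the citation.
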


In other words, an LG seed stays an LG seed under arbitrary sequences of mutations.\\[1ex]
To see the concrete examples of LG seeds we will be interested in, we reproduce the list of LG seeds corresponding to toric del Pezzo surfaces from \cite{PT}.

\begin{theorem}\cite[Table 1]{PT}\label{delpezzoseeds} Any of the following combinations of toric del Pezzo surfaces, potentials and directions constitutes an LG seed:
\begin{center}
\begin{tabular}{|c|l|}\hline
del Pezzo & potential, directions\\ \hline\hline
$\mathbb{C}P^2$&  $z_1+z_2+z_1^{-1}z_2^{-1}$,\\ \hline
&  $(1,1),(-2,1),(1,-2)$\\ \hline
$\mathbb{C}P^1\times\mathbb{C}P^1$&  $z_1+z_2+z_1^{-1}+z_2^{-1}$,\\ \hline  & $(1,1),(1,-1),(-1,1),(-1,-1)$\\ \hline
${\rm Bl}_1\mathbb{C}P^2$& $z_1+z_2+z_1^{-1}z_2^{-1}+z_1z_2$, \\ \hline &  $(-2,1),(1,-2),(1,0),(0,1)$\\  \hline
${\rm Bl}_2\mathbb{C}P^2$&  $z_1+z_2+z_1^{-1}+z_2^{-1}+z_1^{- 1}z_2^{-1}$,\\  \hline &  $(1,-1),(-1,1),(-1,0),(0,-1),(1,1)$\\ \hline
${\rm Bl}_3\mathbb{C}P^2$& $z_1+z_2+z_1^{-1}+z_2^{-1}+z_1z_2+z_1^{-1}z_2^{-1}$, \\ \hline &  $\pm(1,-1),\pm(1,0),\pm(0,1)$\\ \hline
\end{tabular}
\end{center}

\end{theorem}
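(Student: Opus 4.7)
The plan is a direct case-by-case verification. In each row of the table the listed vectors are manifestly primitive and pairwise distinct, so the only substantive content is the Laurent polynomial condition $\mu_vW\in\mathbb{C}[z_1^\pm,z_2^\pm]$ for every direction $v$ appearing in that row.

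Fixing such a $v$, writing $W=\sum_w c_w z^w$, and setting $y:=z^{v^\perp}$, one rewrites
$$\mu_vW\;=\;(1+y)^{-M_+}\sum_w c_w\,z^w(1+y)^{M_+-(v,w)},\qquad M_+:=\max_{c_w\ne 0}(v,w).$$
Inspection of the five rows shows that every listed direction satisfies $(v,w)\leq 1$ for every monomial $z^w$ of the associated potential, so $M_+=1$ throughout. Denoting the Laurent polynomial in the numerator by $P_v$, the claim reduces to the single linear identity $P_v|_{y=-1}=0$: equivalently, the partial sum $\sum_{(v,w)=1}c_w z^w$ of monomials of $W$ pairing to $+1$ with $v$ must lie in the ideal $(1+z^{v^\perp})$, while the remaining monomials (with $(v,w)\leq 0$) contribute an automatically polynomial tail.

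To streamline the roughly twenty individual checks I would exploit the evident symmetries. The involution $z_1\leftrightarrow z_2$ preserves every row and acts on its set of directions; the rows of $\mathbb{C}P^1\times\mathbb{C}P^1$ and ${\rm Bl}_3\mathbb{C}P^2$ are additionally stabilized by $(z_1,z_2)\mapsto(z_1^{-1},z_2^{-1})$; and the row of $\mathbb{C}P^2$ carries the cyclic $S_3$-action permuting its three monomials together with its three directions. After these reductions, each row collapses to at most a handful of representative computations. A typical one, for $\mathbb{C}P^2$ with $v=(1,1)$ and hence $y=z_1z_2^{-1}$, is the identity $(z_1+z_2)|_{z_1=-z_2}=0$ collecting the two monomials of $W$ pairing to $+1$ with $v$; the remaining monomial $z_1^{-1}z_2^{-1}$ has $(v,w)=-2$ and contributes a polynomial outright.

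No serious obstacle is anticipated: the only mechanism at work is cancellation of a simple pole along $1+z^{v^\perp}=0$, and the required vanishing is in each case a one-line calculation. The mildest complication lies in ${\rm Bl}_3\mathbb{C}P^2$, whose potential has six monomials and whose direction set has six elements before the action of the two commuting involutions.
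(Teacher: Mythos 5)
Your verification is correct, but note that the paper itself offers no proof of this statement: it is imported wholesale by citation from [PT, Table~1] (where the content is established alongside the geometric realization by Lagrangian seeds). Your argument is therefore a self-contained algebraic substitute, and it is sound. Since an LG seed in the sense of Definition~\ref{deflgseed} requires only that the $v_i$ be primitive and pairwise distinct and that each $\mu_{v_i}W$ be Laurent, your reduction is exactly right: writing $\mu_v(z^w)=z^w(1+z^{v^\perp})^{-(v,w)}$, one checks (and I confirm for all five rows) that $(v,w)\le 1$ for every direction $v$ and every exponent $w$ of the corresponding potential, so the only possible pole is simple along $1+z^{v^\perp}=0$, and Laurentness is equivalent to $\sum_{(v,w)=1}c_wz^w$ lying in the ideal $(1+z^{v^\perp})$ of $\mathbb{C}[z_1^\pm,z_2^\pm]$ (which, since $v^\perp$ is primitive, is a prime ideal, so the substitution criterion you use is legitimate). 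Each such partial sum consists of exactly two monomials whose exponents differ by $v^\perp$, e.g.\ $z_2+z_1^{-1}z_2^{-1}=z_1^{-1}z_2^{-1}(1+z_1z_2^2)$ for $v=(-2,1)$ in the $\mathbb{C}P^2$ and ${\rm Bl}_1\mathbb{C}P^2$ rows, so the divisibility is immediate in every case; the symmetries you invoke genuinely cut down the casework and do act as claimed on the direction sets. What your approach does not recover is the geometric content of [PT] --- that these potentials are Floer potentials of actual monotone Lagrangian tori --- but that is the subject of Theorem~\ref{mfp}, not of the statement at hand.
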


In the following, we take the above LG seeds as initial ones, and consider all of their mutations:

\begin{definition}\label{alllgseeds} Let $X$ be a toric del Pezzo surface, and let $${\bf s}(X)=(W(X),v_1,\ldots,v_n),$$ be the corresponding LG seed in the above table. For ${\bf i}=(i_1,\ldots,i_N)$ a sequence of indices in $\{1,\ldots,n\}$ without repetitions, define $$s_{\bf i}(X)=\mu_{i_N}\cdots\mu_{i_1}{\bf s}(X)$$ as the corresponding mutation of ${\bf s}(X)$, and denote by $$W_{\bf i}(X)=\mu_{i_N}\cdots\mu_{i_1}W(X)$$ its potential.
\end{definition}

\subsection{Floer potentials}\label{fp}

To define Floer potentials and their mutation, we continue to follow \cite{PT}. We continue to denote by $X$ a toric del Pezzo surface viewed as symplectic manifold. For a monotone Lagrangian submanifold $L\subset X$, we parametrize ${\rm GL}_1(\mathbb{C})$-local systems by ${\rm Hom}(H_1(L,\mathbb{Z}),\mathbb{C}^*)$. We then define the Floer potential (equivalently, disk or Landau-Ginzburg potential) $$W_L:{\rm Hom}(H_1(L,\mathbb{Z}),\mathbb{C}^*)\rightarrow\mathbb{C}$$
of $L$ as the count of Maslov index $2$ holomorphic disks in $X$ with boundary on $L$, weighted by the holonomy of the local system on $L$:
$$W_L(\rho)=\sum_\delta n_\delta\rho(\partial\delta),$$
where the sum ranges over classes $\delta\in H_2(X,L)$ of Maslov index $2$, the number $n_\delta$ is the count of holomorphic disks in class $\delta$, and $\rho(\partial\delta)$ is the holonomy along the boundary loop of the disk of the local system determined by $\rho$. More precisely, $n_\delta$ is the degree of the map ``evaluation at the marked point'' $$\mathcal{M}(L,\delta)\rightarrow L$$ on the moduli space of holomorphic disks with boundary on $L$ of Maslov index $2$ with a single marked point on the boundary of the disk representing the class $\delta$ (with respect to a generic almost complex structure). Identifying $H_1(L,\mathbb{Z})$ with $\mathbb{Z}^2$ 
by chosing a basis, $\partial \delta$ becomes an element $(l_1,l_2)$ of $\mathbb{Z}^2$. Monotonicity of $L$ and Gromov compactness then imply that $W_L$, viewed as a function $(\mathbb{C}^*)^2\rightarrow\mathbb{C}$ (using the previous identifications), is a finite sum of terms $n_\delta{\bf z}^{\partial\delta}$ (where ${\bf z}^{\partial\delta}=z_1^{l_1}z_2^{l_2}$) over the classes $\delta$ of Maslov index $2$, and can thus be viewed as a Laurent polynomial in $\mathbb{C}[z_1^\pm,z_2^\pm]$,
$$W_L=\sum_\delta n_\delta\mathbf{z}^{\partial\delta}.$$

For a monotone Lagrange torus $L\subset X$ and a Lagrangian disk $D\subset X$ with boundary on $L$ fulfilling suitable transversality conditions, there is an operation of mutation of $L$ in direction $D$, producing a mutated Lagrangian torus $\mu_DL$ and a mutated  Lagrangian disk $\mu_DD$. This operation is involutive up to a Hamiltonian isotopy.\\[1ex]
A Lagrangian seed $(L,\{D_i\}_i)$ in $X$ consists of a monotone Lagrangian torus $L$ and embedded Lagrangian disks fulfilling suitable transversality conditions. The above mutation procedure can be extended to provide mutations
$$\mu_i^L(L,\{D_j\}_j)=\mu_i(L,\{D_j\}_j)=(\mu_{D_i}L,\{\mu_{D_i}D_j\}_j)$$
of Lagrangian seeds in arbitrary directions.\\[1ex]
Using the isomorphism $H_1(L,\mathbb{Z})\simeq\mathbb{Z}^2$, we assign to a Lagrangian seed $(L,\{D_i\}_i)$ the LG seed $$W(L,\{D_i\}_i)=(W_L,([\partial D_i]^\perp)_i).$$

\begin{theorem}\cite[Theorem 4.8, Proposition 4.22]{PT}\label{mfp} The map $W$ assigning an LG seed to a Lagrangian seed is compatible with the mutation operations, that is,
$$\mu_i^S\circ W=W\circ \mu_i^L.$$
 Moreover, there exists an {\rm initial} Lagrangian seed $(L(X),\{D_i(X)\}_i)$ realizing the LG seeds ${\bf s}(X)$ of Theorem \ref{delpezzoseeds}.
\end{theorem}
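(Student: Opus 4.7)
The plan is to prove the two assertions separately: first the compatibility $\mu_i^S\circ W=W\circ\mu_i^L$, then the existence of initial Lagrangian seeds realizing the LG seeds of Theorem \ref{delpezzoseeds}.

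For the compatibility, fix a Lagrangian seed $(L,\{D_j\}_j)$, set $v_j=[\partial D_j]^\perp\in\mathbb{Z}^2$, and let $L'=\mu_{D_i}L$. After fixing a canonical identification $H_1(L',\mathbb{Z})\simeq H_1(L,\mathbb{Z})$ coming from the surgery defining Lagrangian mutation, three facts need to be established: (a) $W_{L'}=\mu_{v_i}(W_L)$; (b) $[\partial\mu_{D_i}D_i]^\perp=-v_i$; and (c) $[\partial\mu_{D_i}D_j]^\perp=\mu_{v_i}(v_j)$ for $j\neq i$. Items (b) and (c) are purely topological; they follow by unpacking $\mu_{D_i}$ as a Dehn-twist-type modification of $L$ along $D_i$, which acts on $H_1$ by an explicit transvection whose perpendicular realizes the tropical rule $\mu_{v_i}w=w+[\{w,v_i\}]_+v_i$ introduced in Section \ref{lgp}.

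The main work is (a), the wall-crossing formula. Following the strategy of Auroux and Pascaleff--Tonkonog, I would set up a neck-stretching argument: near $D_i$, the pair $(L,L')$ is modeled on the Chekanov torus versus the product torus in $\mathbb{C}^2$, a local model whose Maslov index $2$ disk counts can be computed by hand. Stretching the neck along the boundary of a tubular neighborhood of $D_i$ decomposes any Maslov index $2$ holomorphic disk in either torus into pieces that either lie outside the neighborhood or factor through the local model. Gromov compactness together with transversality for a generic almost complex structure then identifies the two potentials as Laurent polynomials whose monomials $z^w$ differ exactly by the factor $(1+z^{v_i^\perp})^{-(v_i,w)}$ predicted by the cluster $\mathcal{X}$-mutation $\mu_{v_i}(z^w)=z^w(1+z^{v_i^\perp})^{-(v_i,w)}$, the binomial being the generating function for the local-model contributions.

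For the initial seeds, take $L(X)$ to be the monotone moment-map torus fiber of the toric del Pezzo surface $X$. Its Floer potential is the classical toric superpotential, a sum of one Laurent monomial per primitive inward facet normal of the moment polytope; a case-by-case inspection of the fans of $\mathbb{C}P^2$, $\mathbb{C}P^1\times\mathbb{C}P^1$ and ${\rm Bl}_k\mathbb{C}P^2$ for $k=1,2,3$ reproduces the potentials listed in Theorem \ref{delpezzoseeds}. The Lagrangian disks $D_i(X)$ are then constructed as Lagrangian thimbles of nodal toric degenerations of $X$, one per direction in the table; the boundary class $[\partial D_i(X)]\in H_1(L(X),\mathbb{Z})$ is computed directly from the degeneration, and its perpendicular reproduces the listed direction vector. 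The principal obstacle throughout is the wall-crossing analysis in (a): controlling bubbling, checking transversality, and obtaining a clean bijective correspondence between disk contributions before and after mutation that matches the algebraic formula on the nose is the delicate symplectic heart of the theorem.
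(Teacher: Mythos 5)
This statement is not proved in the paper at all: it is quoted verbatim from Pascaleff--Tonkonog \cite[Theorem 4.8, Proposition 4.22]{PT} and used as a black box, so there is no in-paper argument to compare yours against. Judged on its own terms, your proposal is a reasonable reconstruction of the \emph{strategy} of the cited work -- the decomposition into (a) the wall-crossing formula for the potential, (b), (c) the topological bookkeeping of boundary classes, plus the identification of the initial seed with the monotone moment-map fiber carrying the Hori--Vafa potential and thimbles coming from nodal degenerations of the toric structure -- but it is a plan rather than a proof. You say so yourself: the entire analytic core of (a) (compactness, transversality, the identification of disk counts before and after surgery with the factor $(1+z^{v_i^\perp})^{-(v_i,w)}$) is flagged as ``the delicate symplectic heart'' and left unexecuted. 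That is precisely the content of \cite{PT}, where it occupies a substantial part of the paper and is handled via Fukaya's trick and a deformation/wall-crossing analysis for a Lagrangian isotopy through the local model, rather than by a literal neck-stretching bijection of disks; so even the outline should be attributed as a sketch of their argument rather than an independent proof.

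One concrete imprecision worth fixing in (b), (c): the tropical rule $\mu_{v_i}v_j=v_j+[\{v_j,v_i\}]_+v_i$ is piecewise linear, so it cannot be induced by ``an explicit transvection'' on $H_1$. The transvection is only the composite $\mu_{-v}\mu_v$ (as the paper notes when observing that seed mutation is not involutive); the $[\cdot]_+$ in the single mutation arises from a convention for how the disks $D_j$ are transported through the surgery region (two choices differing by the transvection, selected according to the sign of $\{v_j,v_i\}$), together with the chosen identification $H_1(\mu_{D_i}L,\mathbb{Z})\simeq H_1(L,\mathbb{Z})$. Without pinning down that convention, items (b) and (c) are not ``purely topological'' consequences but depend on the same choices that make (a) come out in the stated form. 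In a context like the present paper, the honest move is to cite \cite{PT} for this theorem, as the authors do.
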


We can then define iterated mutated Lagrangian seeds
$$(L_{\bf i}(X),\{D_j^{\bf i}(X)\}_j)=\mu_{i_N}\cdots\mu_{i_1}(L(X),\{D_i(X)\}_i)$$
for any sequence ${\bf i}=(i_1,\ldots,i_N)$ without repetitions. By the previous theorem, we thus have

\begin{corollary} For all sequences ${\bf i}$ as above, we have an equality of LG seeds
$$(W_{L_{\bf i}(X)},\{[\partial D_j^{\bf i}(X)]\}_j)={\bf s}_{\bf i}(X).$$
\end{corollary}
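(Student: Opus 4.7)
The plan is to reduce directly to Theorem \ref{mfp} by iterating its mutation-compatibility assertion and invoking the existence of an initial Lagrangian seed. Writing $W$ for the map $(L,\{D_i\}_i)\mapsto (W_L,([\partial D_i]^\perp)_i)$ from Lagrangian seeds to LG seeds, the compatibility $\mu_i^S\circ W=W\circ\mu_i^L$ holds in every direction $i$, and can therefore be chained along the sequence ${\bf i}=(i_1,\ldots,i_N)$ of Definition \ref{alllgseeds}.

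Concretely, I would argue by induction on $N$. The base case $N=0$ is precisely the second assertion of Theorem \ref{mfp}, which provides an initial Lagrangian seed $(L(X),\{D_i(X)\}_i)$ realizing ${\bf s}(X)$, that is, $W(L(X),\{D_i(X)\}_i)={\bf s}(X)$. For the inductive step, set ${\bf i}'=(i_1,\ldots,i_{N-1})$ and compute
$$W\bigl(L_{\bf i}(X),\{D_j^{\bf i}(X)\}_j\bigr)=W\bigl(\mu_{i_N}^L(L_{{\bf i}'}(X),\{D_j^{{\bf i}'}(X)\}_j)\bigr)=\mu_{i_N}^S\,W\bigl(L_{{\bf i}'}(X),\{D_j^{{\bf i}'}(X)\}_j\bigr),$$
where the first equality is the definition of the iterated Lagrangian mutation and the second is Theorem \ref{mfp}. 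By the induction hypothesis the right-hand side equals $\mu_{i_N}^S{\bf s}_{{\bf i}'}(X)$, which by Definition \ref{alllgseeds} is precisely ${\bf s}_{\bf i}(X)$. Unpacking $W$ on the left-hand side then yields the displayed equality of LG seeds, with the understanding that the directions on the left are really $[\partial D_j^{\bf i}(X)]^\perp$ in accordance with the convention set up in Section \ref{lgp}.

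I do not expect any substantive obstacle: the statement is a formal consequence of the naturality of $W$ under mutation, once Theorem \ref{mfp} (which packages the nontrivial wall-crossing content from \cite{PT}) is granted. The only minor bookkeeping point is that the inductive step requires each intermediate seed ${\bf s}_{{\bf i}'}(X)$ to still be an LG seed on which $\mu_{i_N}^S$ is defined, which is ensured by the invariance theorem from \cite{CMG} quoted above, together with the ``without repetitions'' hypothesis on ${\bf i}$ guaranteeing that the corresponding Lagrangian mutations remain well-defined at each stage.
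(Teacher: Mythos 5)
Your proof is correct and follows exactly the route the paper intends: the corollary is stated as an immediate consequence of Theorem \ref{mfp}, obtained by iterating the compatibility $\mu_i^S\circ W=W\circ\mu_i^L$ starting from the initial Lagrangian seed, which is precisely your induction. Your remark that the directions should be read as $[\partial D_j^{\bf i}(X)]^\perp$ correctly identifies a small notational slip in the statement itself.
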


\subsection{Cluster algebras}\label{ca}
We now turn to the purely algebraic part and recall some cluster algebra terminology following \cite{FZ4}. Let $B=(b_{i,j})$ be a skew-symmetric $n\times n$-matrix. We define the mutation $\mu_iB=B'=(b_{j,k}')$ in direction $i=1,\ldots,n$ of $B$ by
$$b_{j,k}'=b_{j,k}\mbox{ if } j=i\mbox{ or }k=i,$$
$$b_{j,k}'=b_{j,k}+[b_{j,i}]_+[b_{i,k}]_+-[-b_{j,i}]_+[-b_{i,k}]_+\mbox{ if } j\not=i\not=k.$$ We define the mutation of $x$-variables as the automorphism $\mu_i^C=\mu_i$ of the rational function field $\mathbb{C}(x_1,\ldots,x_n)$ given by $\mu_i(x_j)=x_j'$ for
$$x_i'={x_i}^{-1}\prod_j{x_j}^{[b_{i,j}]_+}(1+\prod_j{x_j}^{-b_{i,j}})\mbox{ and } x_j'=x_j,\, j\not=i,$$
a variant of the more common $$x_i'=x_i^{-1}(\prod_jx_j^{[b_{i,j}]_+}+\prod_jx_j^{[b_{j,i}]_+}).$$

We define the $y$-variables by $$y_i=\prod_{j}x_j^{b_{j,i}},$$
and similarly for $y_i'$, so that the previous mutation rule reads
$$x_i'=x_i^{-1}\prod_j{x_j}^{[b_{i,j}]_+}(1+{y_i}).$$
The mutation of $y$-variables is then given by
$$y_i'={y_i}^{-1}\mbox{ and } y_j'={y_j}{y_i}^{[b_{i,j}]_+}(1+{y_i})^{-b_{i,j}},\, j\not=i.$$

\subsection{Representations of quivers with potentials}\label{rqp}

Turning to the quiver theoretic approach to cluster algebras and mutation, we introduce representations of quivers with potential following \cite{DWZ2}. Given a matrix $B$ as in the previous subsection, we associate to it a quiver $Q=Q(B)$ with vertices $i=1,\ldots,n$, such that $$\mbox{ for all $i,j\in Q_0$, there are }[-b_{i,j}]_+\mbox{ arrows from $i$ to $j$}.$$
Note that $Q(B)$ has, by definition, no loops or two-cycles. We have
$$Q(\mu_iB)=\mu_iQ(B),$$
where the mutation $\mu_iQ$ of a quiver at a vertex $i$ is defined by reversing all arrows incident with $i$, adding an arrow $$[\beta\alpha]:j\rightarrow k$$ for every pair of arrows $$j\stackrel{\alpha}{\rightarrow}i\stackrel{\beta}{\rightarrow k},$$
and deleting all resulting two-cycles.\\[1ex]
We denote by $\widehat{\mathbb{C}Q}$ the completed (with respect to path length) path algebra of $Q$
. The factor $\widehat{\mathbb{C}Q}/[\widehat{\mathbb{C}Q},\widehat{\mathbb{C}Q}]$ by (additive) commutators is spanned by cyclic equivalence classes of formal sums of oriented cycles. An element $S$ in this space is called a potential for $Q$, for which cyclic derivatives $\partial_\alpha S$ along the arrows $\alpha$ in $Q$ are defined, as well as double cyclic derivatives $\partial_{\beta\alpha} S$. For the detailed definition, we refer to \cite[(4.3),(4.4)]{DWZ2}.\\[1ex]
For a sufficiently general (non-degenerate in the terminology of \cite{DWZ2}) potential $S$ on $Q$,
one can naturally define a mutation $\mu_iS$ as a potential on $\mu_iQ$ (note that such potentials always exist by \cite[Corollary 7.4]{DWZ1}).\\[1ex]
We consider representations $V$ of $Q$ which satisfy the Jacobian relations that all $\partial_\alpha S$ are represented by zero in $V$, thus
$$V_{\partial_\alpha S}=0,$$ which we call representations of the quiver with potential $(Q,S)$. Moreover, we require all such representations to be nilpotent, that is, every oriented cycle $\omega$ is represented by a nilpotent endomorphism $V_\omega$.\\[1ex]
Slightly more generally, we consider decorated representations $(V,V^-)$, consisting of a representation $V$ of $(Q,S)$, together with vector spaces $V^-_i$ for $i=1,\ldots,n$. As a special case, we consider the negative simple decorated representation $S_i^-$: it consists of the zero quiver representation, and the additional vector spaces are given by  $(S_i^-)_j^-=0$ for $j\not=i$ and $(S_i^-)^-_i=\mathbb{C}$. \\[1ex]
For every (decorated) representation $V$ of $(Q,S)$ (the potential $S$ still assumed to be non-degenerate) and all $i\in Q_0$, a mutation $\mu_iV$ is defined in \cite{DWZ2}, which is a (decorated) representation of $(\mu_iQ,\mu_iS)$. The operation of mutation is involutive up to so-called right equivalence, that is, $\mu_i\mu_iV$ is isomorphic to $\varphi^*V$ for some automorphism $\varphi$ of $\widehat{\mathbb{C}Q}$ fixing $S$.

\subsection{$F$-polynomials and cluster characters} For a decorated representation $V$ of a quiver $Q$ and a dimension vector ${\bf e}$ with entries $e_i\leq\dim V_i$ for all $i$ we define the quiver Grassmannian ${\rm Gr}_{\bf e}(V)$ as the complex projective variety of ${\bf e}$-dimensional subrepresentations of $V$, viewed for example as a Zariski-closed subset of the product $\prod_i{\rm Gr}_{e_i}(V_i)$ of ordinary  Grassmannians. We define the $F$-polynomial of $V$ encoding the topological Euler characteristic $\chi({\rm Gr}_{\bf e}(V))$ of all quiver Grassmannians:
$$F_V(u)=\sum_{\bf e}\chi({\rm Gr}_{\bf e}(V))\prod_iu_i^{e_i}\in\mathbb{C}[u_1,\ldots,u_n].$$

We define the ${\bf g}$- and ${\bf h}$-vectors of $V$:\\[1ex]
Fixing a vertex $i$ of $Q$, any decorated representation $V$ gives rise to a triangle of linear maps

\begin{center}
\tikzcdset{row sep/normal=1cm}
\begin{tikzcd}
 & V_i\arrow[dr, "\beta_i"] & \\
 V_i^{\rm in} \arrow[ur, "\alpha_i"] &  & V_i^{\rm out} \arrow[ll, "\gamma_i"]
\end{tikzcd}
\end{center}

where
$$V_i^{\rm in}=\bigoplus_{\alpha:j\rightarrow i}V_j,\; V_i^{\rm out}=\bigoplus_{\beta:i\rightarrow j}V_j,$$
$$\alpha_i=(V_\alpha)_{\alpha:j\rightarrow i},\; \beta_i=(V_\beta)_{\beta:i\rightarrow j},$$
and the $(\alpha,\beta)$-component of $\gamma_i$ is given by
$$(\gamma_i)_{(\alpha,\beta)}=V_{\partial_{\beta\alpha}S}.$$

We then define $$h_i(V)=-\dim{\rm Ker}(\beta_i)$$ and $$g_i(V)=\dim{\rm Ker}(\gamma_i)-\dim V_i+\dim V_i^-,$$ 
the latter using the decoration datum $V^-$. We have the following relation between ${\bf g}$- and ${\bf h}$-vectors: $$g_i(V)=h_i(V)-h_i(\mu_iV).$$ The ${\bf g}$-vectors fulfill the mutation rule
$$g_i(\mu_iV)=-g_i(V),\; g_j(\mu_iV)=g_j(V)+[b_{j,i}]_+g_i(V)-b_{j,i}h_i(V),\, i\not=j.$$

We have the following mutation rule for $F$-polynomials, using the notation for mutation of variables of Section \ref{ca}:

\begin{lemma}\cite[Lemma 5.2]{DWZ2} We have
$$(1+y_i)^{h_i(V)}F_V(y)=(1+{y_i'})^{h_i(\mu_i(V))}F_{\mu_iV}(y').$$
\end{lemma}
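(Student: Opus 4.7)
The plan is to follow the approach of \cite[Lemma 5.2]{DWZ2}, comparing generating functions for Euler characteristics of quiver Grassmannians on the two sides via an analysis local at vertex $i$. Since both sides are Laurent polynomials in $y$ (after the substitution $y_i' = y_i^{-1}$), it suffices to check the identity monomial by monomial in the $y_j$, $j \neq i$.

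First, I would reduce the identity to a local statement at $i$. Since $V$ and $\mu_i V$ agree on all vertex spaces and arrows outside $i$, a subrepresentation of either is determined, once subspaces $U_j \subseteq V_j$ for $j \neq i$ are fixed, by a compatible subspace $U_i \subseteq V_i$ (respectively $U_i' \subseteq V_i'$). Setting $U_i^{\rm in} := \bigoplus_{\alpha : j\to i} U_j$ and $U_i^{\rm out} := \bigoplus_{\beta : i\to j} U_j$, the compatibility condition reads
$$\alpha_i(U_i^{\rm in}) \subseteq U_i \subseteq \beta_i^{-1}(U_i^{\rm out}),$$
so $U_i$ ranges over an ordinary Grassmannian of subspaces in the quotient $\beta_i^{-1}(U_i^{\rm out})/\alpha_i(U_i^{\rm in})$. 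The analogous description in terms of the mutated triangle $(\alpha_i',\beta_i',\gamma_i')$ holds for $U_i' \subseteq V_i'$.

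Second, I would compute the resulting $y_i$-dependence. Specializing the Gaussian binomial theorem at $q = 1$ gives $\sum_k \chi(\operatorname{Gr}_k(M))\, y_i^k = (1+y_i)^{\dim M}$ for any finite-dimensional $M$, so after summing over $U_i$ the $y_i$-contribution to $F_V$ with $(U_j)_{j\neq i}$ fixed is
$$y_i^{\dim \alpha_i(U_i^{\rm in})}\,(1+y_i)^{\dim(\beta_i^{-1}(U_i^{\rm out})/\alpha_i(U_i^{\rm in}))},$$
with a parallel expression on the $\mu_iV$-side. Using the mutation rules $y_i' = y_i^{-1}$ and $y_j' = y_j y_i^{[b_{i,j}]_+}(1+y_i)^{-b_{i,j}}$ for $j \neq i$, I would show that the powers of $y_i$ absorb the shift between $\dim \alpha_i(U_i^{\rm in})$ and its counterpart on the mutated side, and the powers of $(1+y_i)^{-1}$ in $y_j'$ absorb the discrepancy in the $(1+y_i)$-exponents; the remaining global discrepancy is a prefactor that depends only on the ambient representations, and a direct dimension count using $h_i(V) = -\dim \operatorname{Ker} \beta_i$ together with its analogue $h_i(\mu_iV)$ shows it is exactly $(1+y_i)^{h_i(V)}/(1+y_i')^{h_i(\mu_iV)}$.

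The main obstacle is the combinatorial bookkeeping in the second step: one must match the triangle data $(\alpha_i, \beta_i, \gamma_i)$ of $V$ with the mutated triangle on $\mu_iV$ and verify that the corresponding quiver Grassmannians become Grassmannians of subspaces in the \emph{same} ambient quotient $\beta_i^{-1}(U_i^{\rm out})/\alpha_i(U_i^{\rm in})$, with a shifted dimension parameter. This is where the Jacobian relations $V_{\partial_\alpha S} = 0$ enter decisively: they yield the vanishings $\alpha_i \gamma_i = 0$ and $\gamma_i \beta_i = 0$, which are precisely the structure needed to reconstruct $V_i'$ from $V_i$ via the triangle and to identify the two ambient quotients up to the shift by $\dim\operatorname{Ker}\beta_i$ that produces the $(1+y_i)^{h_i(V)}$ factor. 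Once this identification is in place, the lemma reduces to a mechanical comparison of specialized Gaussian binomials against the $y$-variable mutation rule.
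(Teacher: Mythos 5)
First, a calibration: the paper does not prove this lemma at all --- it is imported verbatim as \cite[Lemma 5.2]{DWZ2} --- so there is no in-paper argument to compare yours against, and your sketch has to be measured against the Derksen--Weyman--Zelevinsky proof it would need to reconstruct. Your overall strategy is the right one and is essentially theirs: fiber ${\rm Gr}_{\bf e}(V)$ over the choice of subspaces $(U_j)_{j\neq i}$, observe that the fiber is an ordinary Grassmannian of subspaces of $\beta_i^{-1}(U_i^{\rm out})/\alpha_i(U_i^{\rm in})$, sum over $e_i$ using $\chi({\rm Gr}_k(\mathbb{C}^n))=\binom{n}{k}$ to get $y_i^{\dim\alpha_i(U_i^{\rm in})}(1+y_i)^{\dim\beta_i^{-1}(U_i^{\rm out})-\dim\alpha_i(U_i^{\rm in})}$, and match exponents against the mutation rule for the $y$-variables. (One technical omission here: the dimensions $\dim\alpha_i(U_i^{\rm in})$ and $\dim\beta_i^{-1}(U_i^{\rm out})$ jump as $(U_j)_j$ varies, so the summation requires stratifying the base by these values and using additivity of $\chi$ on constructible strata; routine, but it should be said.)

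There are two genuine gaps. First, your opening reduction rests on the claim that $V$ and $\mu_iV$ ``agree on all vertex spaces and arrows outside $i$''. They do not: $\mu_iQ$ contains new composite arrows $[\beta\alpha]:j\to k$, represented in $\mu_iV$ by $V_\beta V_\alpha$, and the subsequent deletion of two-cycles replaces the naive mutated representation by a reduced one that is only right-equivalent to it. So it is not automatic that the same tuples $(U_j)_{j\neq i}$ arise from subrepresentations on both sides; one must check that stability under the new arrows is exactly the nonemptiness condition $\beta_i\alpha_i(U_i^{\rm in})\subseteq U_i^{\rm out}$ for the fiber on the $V$-side, and that reduction and right-equivalence preserve the Euler characteristics of all quiver Grassmannians. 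Second, and more seriously, the entire content of the lemma is concentrated in the sentence where you say the powers of $y_i$ and of $(1+y_i)$ ``absorb'' the discrepancies. To make that work you need explicit formulas for $\dim\bar\alpha_i(\overline U{}_i^{\rm in})$ and $\dim\bar\beta_i^{-1}(\overline U{}_i^{\rm out})$ on the mutated side in terms of the unmutated data, and these come only from the concrete construction of $(\mu_iV)_i$ out of the kernels and images of $\alpha_i,\beta_i,\gamma_i$ (and the decoration space $V_i^-$) together with the chosen splittings, using $\alpha_i\gamma_i=0$ and $\gamma_i\beta_i=0$. You correctly identify the Jacobian relations as the decisive input, but the dimension count that actually produces the exponents $-b_{i,j}$ appearing in $y_j'$ and the global factor $(1+y_i)^{h_i(V)}(1+y_i')^{-h_i(\mu_iV)}$ is asserted rather than performed. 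As written, this is a credible plan for the proof rather than a proof.
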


The following is the central definition for our purposes, originating in \cite{CC}, and written in this form in \cite[Remark 5.2]{P}:

\begin{definition} We define the cluster character of a decorated representation $V$ by
$${\rm CC}_V(x)=x^{{\bf g}(V)}F_V(y)\in\mathbb{C}[x_1^\pm,\ldots,x_n^\pm].$$
\end{definition}

For two decorated representations $V$ and $W$ of $Q$, we have
$$F_{V\oplus W}(x)=F_V(x)\cdot F_W(x)$$ and
$${\bf g}(V\oplus W)={\bf g}(V)+{\bf g}(W),$$
so that
$${\rm CC}_{V\oplus W}(x)={\rm CC}_V(x)\cdot {\rm CC}_W(x).$$
It is thus natural to extend cluster characters  to virtual representations, that is, elements of the split Grothendieck group $K_0^\oplus$ of the category of decorated representations. Namely, every element of this group can be written as a difference $[V]-[W]$ of isomorphism classes of representations, and we define
$${\rm CC}_{[V]-[W]}(x)=\frac{{\rm CC}_V(x)}{{\rm CC}_W(x)}\in\mathbb{C}(x_1,\ldots,x_n).$$
Note also that the mutation operation $\mu_i$ naturally extends to virtual representations.

\begin{lemma}\label{wk} The cluster character of virtual representations is invariant under mutation, that is,
$${\rm CC}_V(x)={\rm CC}_{\mu_iV}(x').$$
\end{lemma}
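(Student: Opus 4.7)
Since the cluster character is multiplicative on direct sums and extended to virtual representations by the quotient formula, it suffices to verify the identity for an honest decorated representation $V$. The plan is to expand both sides using the definition $\mathrm{CC}_V(x) = x^{g(V)} F_V(y)$ and then reduce the required equality to the $F$-polynomial mutation rule of the preceding lemma.

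First I would rewrite $(x')^{g(\mu_i V)} = \prod_j (x_j')^{g_j(\mu_i V)}$ as an expression involving the unprimed $x_j$. For $j \neq i$ one has $x_j' = x_j$, and the mutation rule for $g$-vectors gives $g_j(\mu_i V) = g_j(V) + [b_{j,i}]_+ g_i(V) - b_{j,i} h_i(V)$. For $j = i$ one uses $g_i(\mu_i V) = -g_i(V)$ together with $x_i' = x_i^{-1}\prod_k x_k^{[b_{i,k}]_+}(1+y_i)$. Combining these and using $[b_{j,i}]_+ - [-b_{j,i}]_+ = b_{j,i}$ (applied via $[b_{i,k}]_+ = [-b_{k,i}]_+$), the factor $\prod_{k \neq i} x_k^{-[b_{i,k}]_+ g_i(V)}$ coming from $x_i'$ combines with the $[b_{j,i}]_+ g_i(V)$ term to yield a clean factor $x_j^{b_{j,i} g_i(V)}$. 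Using $g_i(V) - h_i(V) = -h_i(\mu_i V)$, the dependence on $g_i(V)$ and $h_i(V)$ collapses to $-h_i(\mu_i V)$, and the product $\prod_j x_j^{-b_{j,i} h_i(\mu_i V)}$ becomes $y_i^{-h_i(\mu_i V)}$ since $y_i = \prod_j x_j^{b_{j,i}}$. The net outcome is
\[ (x')^{g(\mu_i V)} = x^{g(V)} \cdot (1+y_i)^{-g_i(V)} \cdot y_i^{-h_i(\mu_i V)}. \]

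The remaining task is to show that multiplying this by $F_{\mu_i V}(y')$ reproduces $F_V(y)$. Here the previous lemma supplies $F_{\mu_i V}(y') = (1+y_i')^{-h_i(\mu_i V)}(1+y_i)^{h_i(V)} F_V(y)$, and the mutation of $y$-variables gives $y_i' = y_i^{-1}$, so the elementary identity $1 + y_i^{-1} = y_i^{-1}(1+y_i)$ converts $(1+y_i')^{-h_i(\mu_i V)}$ into $y_i^{h_i(\mu_i V)}(1+y_i)^{-h_i(\mu_i V)}$. Substituting this and using once more $g_i(V) = h_i(V) - h_i(\mu_i V)$ makes all powers of $(1+y_i)$ and $y_i$ cancel, leaving $x^{g(V)} F_V(y) = \mathrm{CC}_V(x)$.

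The main obstacle is the bookkeeping in the second paragraph: one must assemble the contributions of several mutation rules (for $x$-variables, for $g$-vectors, and implicitly for $y$-variables via the definition $y_i = \prod_j x_j^{b_{j,i}}$) and recognize that the ``cross terms'' $[b_{j,i}]_+$ versus $[b_{i,k}]_+$ combine via skew-symmetry into the single expression $b_{j,i} g_i(V)$. Once this reduction is done, the remaining step is just a one-line manipulation using $1 + y_i^{-1} = y_i^{-1}(1+y_i)$ and the relation $g_i(V) = h_i(V) - h_i(\mu_i V)$ between $g$- and $h$-vectors.
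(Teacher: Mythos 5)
Your proposal is correct and uses exactly the same ingredients as the paper's proof (the $F$-polynomial mutation lemma, the $g$-vector mutation rule, the relation $g_i(V)=h_i(V)-h_i(\mu_iV)$, the definition of the $y$-variables, and skew-symmetry of $B$ to combine $[b_{j,i}]_+$ with $[b_{i,j}]_+$); the only difference is that you run the computation from the primed side back to $x^{{\bf g}(V)}F_V(y)$, whereas the paper transforms $x^{{\bf g}(V)}F_V(y)$ forward into $(x')^{{\bf g}(\mu_iV)}F_{\mu_iV}(y')$. This is essentially the same argument, and your intermediate identity $(x')^{{\bf g}(\mu_iV)}=x^{{\bf g}(V)}(1+y_i)^{-g_i(V)}y_i^{-h_i(\mu_iV)}$ checks out.
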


\begin{proof} This lemma is essentially \cite[Corollary 4.14]{P0}. Due to different conventions in notation, we give a complete proof here. We can assume $V$ to be a decorated representation. Using the mutation rule for $F$-polynomials, the relation between ${\bf g}$- and ${\bf h}$-vectors and the definition of $y$-variables, we have $$x^{{\bf g}(V)}F_V(y)=x^{{\bf g}(V)}(1+y_i)^{-h_i(V)}(1+y_i)^{h_i(V)}F_V(y)=$$
$$=x^{{\bf g}(V)}(1+y_i)^{-h_i(V)}(1+y_i^{-1})^{h_i(\mu_iV)}F_{\mu_iV}(y')=$$
$$=x^{{\bf g}(V)}(1+y_i)^{-h_i(V)+h_i(\mu_iV)}y_i^{-h_i(\mu_iV)}F_{\mu_iV}(y')=$$
$$=x^{{\bf g}(V)}(1+y_i)^{-g_i(V)}\prod_{j\not=i}x_j^{-b_{j,i}h_i(\mu_iV)}F_{\mu_iV}(y')=$$
$$=x_i^{g_i(V)}\prod_{j\not=i}x_j^{g_j(V)-b_{j,i}h_i(\mu_iV)}(1+y_i)^{-g_i(V)}F_{\mu_iV}(y').$$
We rewrite the exponent of $x_j$ using the mutation rule for ${\bf g}$-vectors as
$$g_j(V)-b_{j,i}h_i(\mu_iV)=g_j(V)-b_{j,i}h_i(V)+b_{j,i}g_i(V)=$$
$$=g_j(\mu_iV)-[b_{j,i}]_+g_i(V)+b_{j,i}g_i(V)=$$
$$=g_j(\mu_iV)-[b_{i,j}]_+g_i(V)$$
and thus $x^{{\bf g}(V)}F_V(y)=$
$$=x_i^{g_i(V)}\prod_{j\not=i}x_j^{g_j(\mu_iV)-[b_{i,j}]_+g_i(V)}(1+y_i)^{-g_i(V)}F_{\mu_iV}(y')=$$
$$=(x_i^{-1}\prod_{j\not=i}x_j^{[b_{i,j}]_+}(1+y_i))^{g_i(\mu_iV)}\prod_{j\not=i}x_j^{g_j(\mu_iV)}F_{\mu_iV}(y')=$$
$$=(x')^{{\bf g}(\mu_iV)}F_{\mu_iV}(y'),$$
proving the lemma.\end{proof}

\section{LG seeds and cluster algebras}\label{comparison}

To an LG seed ${\bf s}=(W,v_1,\ldots,v_n)$ we associate the skew-symmetric matrix $B=B({\bf s})=(b_{i,j})$ given by
$$b_{i,j}=\{v_i,v_j\}.$$ Note that it has rank at most two. This definition is compatible with mutation, that is, $$B(\mu_i({\bf s}))=\mu_i(B({\bf s})).$$

Namely, we have, in the notation of Definition \ref{deflgseed}:
$$\{v_i',v_k'\}=\{-v_i,v_k+[\{v_k,v_i\}]_+v_i\}=-\{v_i,v_k\},$$
for $i\not=k$, and similarly for $\{v_j',v_i'\}$ and $j\not=i$, whereas, for $j\not=i\not=k$, we have
$$\{v_j',v_k'\}=\{v_j+[\{v_j,v_i\}]_+v_i,v_k+[\{v_k,v_i\}]_+v_i\}=$$
$$=\{v_j,v_k\}+[\{v_k,v_i\}]_+\{v_j,v_i\}+[\{v_j,v_i\}]_+\{v_i,v_k\}=$$
$$=b_{j,k}+[b_{j,i}]_+
b_{i,k}+b_{j,i}[b_{k,i}]_+=
b_{j,k}+[b_{j,i}]_+[b_{i,k}]_+
-[-b_{j,i}]_+[-b_{i,k}]_+.$$

Inspired by the treatment of cluster algebras in \cite[Section 2]{GHK}, we define

\begin{definition} The {\it comparison map} $$\Phi=\Phi_{\bf s}:\mathbb{C}[z_1^\pm,z_2^\pm]\rightarrow\mathbb{C}[x_1^\pm,\ldots,x_n^\pm]$$ is defined by $$\Phi_{\bf s}(z^v)=\prod_ix_i^{-(v,v_i)}.$$
\end{definition}

\begin{lemma}\label{phimap} The comparison map $\Phi_{\bf s}$ is compatible with the mutations of Sections \ref{lgp} and \ref{ca}, respectively, that is, we have $$\mu_i^C\circ\Phi_{\bf s}=\Phi_{\mu_i{\bf s}}\circ\mu_i^S.$$
\end{lemma}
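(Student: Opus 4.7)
The statement is an identity of maps on the rational function field, so it suffices to verify it on Laurent monomials $z^v$ for $v \in \mathbb{Z}^2$. The plan is to compute both sides explicitly as elements of $\mathbb{C}(x_1, \ldots, x_n)$ and match them using skew-symmetry $b_{j,i} = -b_{i,j}$ and the identity $a = [a]_+ - [-a]_+$. To economize notation, I will set $e_k = (v, v_k)$ throughout.

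For the LHS, applying $\Phi_{\bf s}$ gives $\prod_j x_j^{-e_j}$. Then $\mu_i^C$ fixes $x_j$ for $j \neq i$ and replaces $x_i$ by $x_i^{-1}\prod_k x_k^{[b_{i,k}]_+}(1+y_i)$, so
\[
\mu_i^C(\Phi_{\bf s}(z^v)) = x_i^{e_i} \prod_{j \neq i} x_j^{-e_j - [b_{i,j}]_+ e_i}\, (1+y_i)^{-e_i}.
\]

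For the RHS, first compute $\mu_{v_i}(z^v) = z^v(1+z^{v_i^\perp})^{-e_i}$, and then apply $\Phi_{\mu_i{\bf s}}$ using the mutated directions $v_i' = -v_i$ and $v_j' = v_j + [b_{j,i}]_+ v_i$ for $j \neq i$. A direct exponent count gives
\[
\Phi_{\mu_i{\bf s}}(z^v) = x_i^{e_i} \prod_{j \neq i} x_j^{-e_j - [b_{j,i}]_+ e_i}.
\]
The crux is evaluating $\Phi_{\mu_i{\bf s}}(z^{v_i^\perp})$: the contribution from $v_i' = -v_i$ vanishes since $(v_i^\perp, v_i) = 0$, and for $j \neq i$ one uses $(v_i^\perp, v_j) = -\{v_i, v_j\} = -b_{i,j} = b_{j,i}$ and again $(v_i^\perp, v_i) = 0$ to kill the $[b_{j,i}]_+ v_i$ correction. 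What drops out is $\prod_{j\neq i} x_j^{-b_{j,i}} = y_i^{-1}$, where $y_i$ refers to the original matrix $B({\bf s})$. Then $1 + y_i^{-1} = y_i^{-1}(1+y_i)$, raised to $-e_i$, contributes a factor $y_i^{e_i}(1+y_i)^{-e_i}$. Expanding $y_i^{e_i} = \prod_{j \neq i} x_j^{b_{j,i} e_i}$ and combining yields
\[
\Phi_{\mu_i{\bf s}}(\mu_{v_i}(z^v)) = x_i^{e_i} \prod_{j \neq i} x_j^{-e_j + (b_{j,i} - [b_{j,i}]_+) e_i}\, (1+y_i)^{-e_i}.
\]

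To finish, apply $b_{j,i} - [b_{j,i}]_+ = -[-b_{j,i}]_+ = -[b_{i,j}]_+$; this matches the LHS exponent of $x_j$ and concludes the verification. The only genuinely delicate point is recognizing that $\Phi_{\mu_i{\bf s}}(z^{v_i^\perp})$ simplifies precisely to $y_i^{-1}$ (with $y_i$ built from the original $B$), which both replaces the $(1+z^{v_i^\perp})$ factor by the cluster factor $(1+y_i)$ and supplies the monomial correction $y_i^{e_i}$ needed to convert the exponent $[b_{j,i}]_+ e_i$ of the mutated directions into the exponent $[b_{i,j}]_+ e_i$ arising from $\mu_i^C(x_i)$. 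The rest is sign bookkeeping around $[a]_+-[-a]_+ = a$.
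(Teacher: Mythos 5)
Your proposal is correct and follows essentially the same route as the paper's own proof: reduce to monomials $z^v$, expand both sides, recognize $\Phi_{\mu_i{\bf s}}(z^{v_i^\perp})=y_i^{-1}$ so that $(1+y_i^{-1})^{-e_i}=y_i^{e_i}(1+y_i)^{-e_i}$, and close the exponent bookkeeping with $b_{j,i}-[b_{j,i}]_+=-[b_{i,j}]_+$. All intermediate identities check out against the paper's conventions, so there is nothing to add.
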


\begin{proof} Namely, we have $$\mu_i(\Phi_{\bf s}(z^v))=\mu_i(\prod_jx_j^{-(v,v_j)})=$$
$$=({x_i}^{-1}\prod_j{x_j}^{[b_{i,j}]_+}(1+y_i))^{-(v,v_i)}\prod_{j\not=i}{x_j}^{-(v,v_j)}=$$
$$={x_i}^{(v,v_i)}\prod_{j\not=i}{x_j}^{-(v,v_j)-[b_{i,j}]_+(v,v_i)}(1+y_i)^{-(v,v_i)}.$$
On the other hand, we have
$$\Phi_{\mu_i{\bf s}}(\mu_i(z^v))=\Phi_{\mu_i{\bf s}}(z^v(1+z^{v_i^\perp})^{-(v_i,v)})=$$
$$=\prod_j{x_j}^{-(v,v_j')}(1+\prod_j{x_j}^{-(v_i^\perp,v_j')})^{-(v_i,v)}.$$
Now
$$\prod_j{x_j}^{-(v_i^\perp,v_j')}=\prod_j{x_j}^{\{v_i,v_j'\}}=\prod_j{x_j}^{b_{i,j}}={y_i^{-1}},$$
so that
$$(1+\prod_j{x_j}^{-(v_i^\perp,v_j')})^{-(v_i,v)}=(1+{y_i}^{-1})^{-(v_i,v)}={y_i}^{(v_i,v)}(1+{y_i})^{-(v_i,v)},$$
and
$$\prod_j{x_j}^{-(v,v_j')}={x_i}^{(v,v_i)}\prod_{j\not=i}{x_j}^{-(v,v_j)-[b_{j,i}]_+(v,v_i)},$$
which combines to $\Phi_{\mu_i{\bf s}}(\mu_i(z^v))=$
$$={x_i}^{(v,v_i)}\prod_{j\not=i}{x_j}^{-(v,v_j)-[b_{j,i}]_+(v,v_i)}{y_i}^{(v_i,v)}(1+{y_i})^{-(v_i,v)}=$$
$$={x_i}^{(v,v_i)}\prod_{j\not=i}{x_j}^{-(v,v_j)-[b_{j,i}]_+(v,v_i)-b_{i,j}(v_i,v)}(1+{y_i})^{-(v_i,v)}=$$
$$={x_i}^{(v,v_i)}\prod_{j\not=i}{x_j}^{-(v,v_j)-[b_{i,j}]_+(v,v_i)}(1+{y_i})^{-(v_i,v)}=$$
$$=\mu_i(\Phi_{\bf s}(z^v))$$
as claimed.\end{proof}

\section{Floer potentials as $F$-polynomials}\label{fpfp}

For each of the LG seeds of Theorem \ref{delpezzoseeds}, we will now describe first the $B$-matrix induced by the seed, and then the corresponding quiver:

$$\mathbb{C}P^2:\left[\begin{array}{rrr}0&3&-3\\ -3&0&3\\ 3&-3&0\end{array}\right],\;
\mathbb{C}P^1\times\mathbb{C}P^1:\left[\begin{array}{rrrr}0&-2&2&0\\ 2&0&0&-2\\ -2&0&0&2\\ 0&2&-2&0\end{array}\right],$$
$${\rm Bl}_1\mathbb{C}P^2:\left[\begin{array}{rrrr}0&3&-1&-2\\ -3&0&2&1\\ 1&-2&0&1\\ 2&-1&-1&0\end{array}\right],\;
{\rm Bl}_2\mathbb{C}P^2:\left[\begin{array}{rrrrr}0&0&-1&-1&2\\ 0&0&1&1&-2\\ 1&-1&0&1&-1\\ 1&-1&-1&0&1\\ -2&2&1&-1&0\end{array}\right],$$
$${\rm Bl}_3\mathbb{C}P^2:\left[\begin{array}{rrrrrr}0&0&1&-1&1&-1\\ 0&0&-1&1&-1&1\\ -1&1&0&0&1&-1\\ 1&-1&0&0&-1&1\\ -1&1&-1&1&0&0\\ 1&-1&1&-1&0&0\end{array}\right].$$

$\mathbb{C}P^2:$\begin{center}
\begin{tikzpicture}[scale=2.0]
\draw[arrows={-angle 45}, shorten >=6.5, shorten <=6.5]   (0.0, 0.0) -- (1.5, 0.0);
\draw[arrows={-angle 45}, shorten >=6.5, shorten <=6.5]  (0,0.0) to [out=20,in=160]  (1.5,0.0);
\draw[arrows={-angle 45}, shorten >=6.5, shorten <=6.5]  (0,0) to [out=340, in=200] (1.5,0);
	
\draw[arrows={-angle 45}, shorten >=6.5, shorten <=6.5]  (1.5, 0) -- (0.75, 1);
\draw[arrows={-angle 45}, shorten >=6.5, shorten <=6.5]  (1.5,0) to [out=150, in=280] (0.75,1);
\draw[arrows={-angle 45}, shorten >=6.5, shorten <=6.5]  (1.5,0) to [out=100, in=330] (0.76,1);
		
\draw[arrows={-angle 45}, shorten >=6.5, shorten <=6.5]   (0.75, 1) -- (0,0);
\draw[arrows={-angle 45}, shorten >=6.5, shorten <=6.5]  (0.75, 1) to [out=210, in=80] (0,0);
\draw[arrows={-angle 45}, shorten >=6.5, shorten <=6.5]  (0.75, 1) to [out=260, in=30] (0,0);

	\node at (0,0) {$3$};
	\node at (1.5,0) {$2$};
	\node at (0.75,1) {$1$};

\end{tikzpicture}
\end{center}
$\mathbb{C}P^1\times\mathbb{C}P^1$:
\begin{center}
\begin{tikzpicture}[scale=2.0]

\draw[arrows={-angle 45}, shorten >=6, shorten <=6]  (1.5,0) to [out=160, in=280] (0.75,0.8);
\draw[arrows={-angle 45}, shorten >=6, shorten <=6]  (1.5,0) to [out=100, in=340] (0.75,0.8);

\draw[arrows={-angle 45}, shorten >=6, shorten <=6]  (0.75, 0.8) to [out=200, in=80] (0,0);
\draw[arrows={-angle 45}, shorten >=6, shorten <=6]  (0.75, 0.8) to [out=260, in=20] (0,0);

\draw[arrows={-angle 45}, shorten >=6, shorten <=6]  (0,0) to [out=280, in=160] (0.75,-0.8);
\draw[arrows={-angle 45}, shorten >=6, shorten <=6]  (0,0) to [out=340, in=100] (0.75,-0.8);

\draw[arrows={-angle 45}, shorten >=6, shorten <=6]  (0.75, -0.8) to [out=80, in=200] (1.5,0);
\draw[arrows={-angle 45}, shorten >=6, shorten <=6]  (0.75, -0.8) to [out=20, in=260] (1.5,0);

	\node at (0,0) {$2$};
	\node at (1.5,0) {$3$};
	\node at (0.75,0.8) {$1$};
	\node at (0.75, -0.8) {$4$};

\end{tikzpicture}
\end{center}
${\rm Bl}_1\mathbb{C}P^2$:
\begin{center}
\begin{tikzpicture}[scale=2.0]

\draw[arrows={-angle 45}, shorten >=6, shorten <=6]  (1.5,0) to [out=160, in=280] (0.75,0.8);
\draw[arrows={-angle 45}, shorten >=6, shorten <=6]  (1.5,0) to [out=100, in=340] (0.75,0.8);
\draw[arrows={-angle 45}, shorten >=6, shorten <=6]  (1.5, 0) -- (0.75,0.8);

\draw[arrows={-angle 45}, shorten >=6, shorten <=6]  (0.75, 0.8) to [out=210, in=70] (0,0);
\draw[arrows={-angle 45}, shorten >=6, shorten <=6]  (0.75, 0.8) to [out=250, in=30] (0,0);
\draw[arrows={-angle 45}, shorten >=6, shorten <=6]  (0.75, 0.8) -- (0.75,-0.8);

\draw[arrows={-angle 45}, shorten >=6, shorten <=6]  (0,0) -- (0.75,-0.8);
\draw[arrows={-angle 45}, shorten >=6, shorten <=6]  (0,0) -- (1.5,0);

\draw[arrows={-angle 45}, shorten >=6, shorten <=6]  (0.75, -0.8) to [out=70, in=210] (1.5,0);
\draw[arrows={-angle 45}, shorten >=6, shorten <=6]  (0.75, -0.8) to [out=30, in=250] (1.5,0);

	\node at (0,0) {$4$};
	\node at (1.5,0) {$2$};
	\node at (0.75,0.8) {$1$};
	\node at (0.75, -0.8) {$3$};

\end{tikzpicture}
\end{center}
${\rm Bl}_2\mathbb{C}P^2$:
\begin{center}
\begin{tikzpicture}[scale=2.0]
\draw[arrows={-angle 45}, shorten >=6, shorten <=6]  (2,0.4) to [out=170, in=290] (1,1.2);
\draw[arrows={-angle 45}, shorten >=6, shorten <=6]  (2,0.4) to [out=110, in=350] (1,1.2);
\draw[arrows={-angle 45}, shorten >=6, shorten <=6]  (1.5, -0.8) to [out=20, in=280] (2,0.4);
\draw[arrows={-angle 45}, shorten >=6, shorten <=6]  (1.5, -0.8) -- (2,0.4);

\draw[arrows={-angle 45}, shorten >=6, shorten <=6]  (2, 0.4)--(0,0.4);
\draw[arrows={-angle 45}, shorten >=6, shorten <=6]  (1, 1.2)--(0,0.4);
\draw[arrows={-angle 45}, shorten >=6, shorten <=6]  (0,0.4)--(1.5, -0.8);
\draw[arrows={-angle 45}, shorten >=6, shorten <=6]  (0,0.4)--(0.5, -0.8);
\draw[arrows={-angle 45}, shorten >=6, shorten <=6]  (0.5, -0.8)--(1.5, -0.8);
\draw[arrows={-angle 45}, shorten >=6, shorten <=6]  (1, 1.2)--(0.5, -0.8);

\draw[arrows={-angle 45}, shorten >=6, shorten <=6] (0.5,-0.8)--(2,0.4);

    \node at(0,0.4) {$4$};
	\node at (2,0.4) {$5$};
	\node at (1,1.2) {$1$};
	\node at (0.5, -0.8) {$3$};
	\node at(1.5, -0.8) {$2$};
	
\end{tikzpicture}
\end{center}
\quad 
\begin{samepage}
${\rm Bl}_3\mathbb{C}P^2$:
\begin{center}
\begin{tikzpicture}[scale=2.0]

\draw[arrows={-angle 45}, shorten >=6, shorten <=6]  (0, 0)--(1.5,0);
\draw[arrows={-angle 45}, shorten >=6, shorten <=6]  (0, 0)--(0.75,-0.8);
\draw[arrows={-angle 45}, shorten >=6, shorten <=6]  (0.75,-0.8)-- (1.5, 0);
\draw[arrows={-angle 45}, shorten >=6, shorten <=6]  (0.75,-0.8)-- (1.5, 0.8);
\draw[arrows={-angle 45}, shorten >=6, shorten <=6]  (1.5, 0)--(1.5, 0.8);
\draw[arrows={-angle 45}, shorten >=6, shorten <=6]  (1.5, 0)--(0.75, 1.6);
\draw[arrows={-angle 45}, shorten >=6, shorten <=6]  (1.5, 0.8)--(0.75, 1.6);
\draw[arrows={-angle 45}, shorten >=6, shorten <=6]  (1.5, 0.8)--(0, 0.8);
\draw[arrows={-angle 45}, shorten >=6, shorten <=6]  (0.75, 1.6)--(0, 0.8);
\draw[arrows={-angle 45}, shorten >=6, shorten <=6]  (0.75, 1.6)--(0, 0);
\draw[arrows={-angle 45}, shorten >=6, shorten <=6]  (0, 0.8)--(0, 0);
\draw[arrows={-angle 45}, shorten >=6, shorten <=6]  (0, 0.8)--(0.75, -0.8);

\node at (0,0) {$4$};
\node at  (1.5, 0) {$5$};
\node at (0,0.8) {$6$};
\node at (1.5,0.8) {$3$};
\node at (0.75,1.6) {$1$};
\node at (0.75,-0.8) {$2$};
\end{tikzpicture}
\end{center}
\end{samepage}

For each of these quivers $Q(X)$, associated to a toric del Pezzo surface $X$, we choose a non-degenerate potential $S(X)$, which we assume to be sufficiently general in a sense made precise in the course of the proof of Lemma \ref{initial} below. For any sequence ${\bf i}=(i_1,\ldots,i_N)$ of vertices in $Q(X)$ without repetitions, we define
$$(Q_{\bf i}(X),S_{\bf i}(X))=\mu_{i_N}\cdots\mu_{i_1}(Q(X),S(X)).$$

Since the mutation operations on LG seeds, skew-symmetric matrices and quivers, respectively, are compatible, we have
$$Q(B({\bf s}_{\bf i}(X)))=Q_{\bf i}(X).$$

For example, for the quiver associated to $X=\mathbb{C}P^2$, the quivers $Q_{\bf i}(X)$ are (all) Markov quivers, that is, triangles with $3a$, $3b$, $3c$ parallel arrows, respectively, where $(a,b,c)$ is a Markov triple.\\[1ex]
We now claim:

\begin{lemma}\label{initial} For every toric del Pezzo surface $X$, and every sufficiently general potential $S(X)$ of $Q(X)$, there exists a virtual representation $P(X)$ of $(Q(X),S(X))$ of the form $P(X)=[V]-[(S_i^-)^c]$ for nonnegative $c$, such that
$${\rm CC}_{P(X)}(x)=\Phi_{{\bf s}(X)}(W(X)).$$
In other words, the LG potential $W(X)\in\mathbb{C}[z_1^\pm,z_2^\pm]$ associated to $X$ in Theorem \ref{delpezzoseeds} corresponds to the cluster character ${\rm CC}_{P(X)}(x)\in \mathbb{C}[x_1^\pm,\ldots,x_n^\pm]$ of $P(X)$ under the map $\Phi_{{\bf s}(X)}$.
\end{lemma}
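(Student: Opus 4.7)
The plan is a case-by-case verification for each of the five toric del Pezzo surfaces in Theorem~\ref{delpezzoseeds}. First, I would unfold $\Phi_{{\bf s}(X)}(W(X))$ as an explicit Laurent polynomial in $x_1,\ldots,x_n$ using the definition of the comparison map, then factor it as $x^{{\bf g}} F(y_1,\ldots,y_n)$ with $y_i=\prod_j x_j^{b_{j,i}}$ and $F$ a polynomial with constant term $1$. In each case, the exponent vectors of $W(X)$, after a common monomial shift, lie in the nonnegative integer cone generated by the directions $y_i$, so this factorization exists; this shift becomes the target ${\bf g}$-vector to be realized by a virtual representation.

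Second, I would exhibit a representation $V$ of $(Q(X),S(X))$ whose $F$-polynomial equals $F$. The natural candidates are representations with thin dimension vector whose quiver Grassmannians degenerate to single points, in bijection with the monomials of $F$. For $X=\mathbb{C}P^2$ take $V$ of dimension vector $(1,1,0)$ with generic arrows $2\to 1$, giving $F_V=1+y_1+y_1y_2$. For $X=\mathbb{C}P^1\times\mathbb{C}P^1$ take the semisimple $V=S_1\oplus S_3$, giving $F_V=(1+y_1)(1+y_3)$. For the three blow-up cases, analogous thin representations are to be constructed so that their subrepresentation posets realize exactly the monomials of $F$.

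Third, compute ${\bf g}(V)$ using the triangles $(\alpha_i,\beta_i,\gamma_i)$ at each vertex, and choose a decoration $N=\bigoplus_j(S_j^-)^{c_j}$ (interpreting the notation $[(S_i^-)^c]$ of the statement as such a direct sum of negative simples) so that the ${\bf g}$-vector of $[V]-[N]$ matches the target extracted from $x^{{\bf g}}$. In the two model cases this yields $P(\mathbb{C}P^2)=[V]-[S_3^-]$ and $P(\mathbb{C}P^1\times\mathbb{C}P^1)=[S_1\oplus S_3]-[S_2^-]-[S_4^-]$; the identity ${\rm CC}_{P(X)}(x)=\Phi_{{\bf s}(X)}(W(X))$ then follows by a direct Laurent polynomial expansion.

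The main obstacle is controlling the rank of $\gamma_i$ at the vertices $i$ where $V_i=0$ but both $V_i^{\rm in}$ and $V_i^{\rm out}$ are non-zero. The entries of $\gamma_i$ are the double cyclic derivatives $V_{\partial_{\beta\alpha}S(X)}$, so $\dim\ker(\gamma_i)$, and hence $g_i$, depends sensitively on the coefficients of $S(X)$. The ``sufficiently general'' hypothesis in the statement is precisely what lets us pick the coefficients of $S(X)$ in a non-empty Zariski-open subset ensuring that each relevant $\gamma_i$ has maximal rank; once this is done the computation of ${\bf g}(V)$ is routine. The Jacobian relations $V_{\partial_\alpha S(X)}=0$ hold for free on the chosen $V$ in every case, because its support misses at least one vertex of every oriented cycle of $S(X)$.
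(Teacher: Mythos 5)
Your overall strategy coincides with the paper's: a case-by-case verification in which one exhibits a thin representation $V$, computes $F_V$ and ${\bf g}(V)$, and corrects the ${\bf g}$-vector by subtracting negative simple decorated representations. Your two worked examples check out (for $\mathbb{C}P^1\times\mathbb{C}P^1$ one indeed gets ${\bf g}(S_1\oplus S_3)=(-1,0,1,2)$ and subtracting $[S_2^-]+[S_4^-]$ lands on $(-1,-1,1,1)$), and you correctly identify that the ``sufficiently general'' hypothesis is what controls $\dim\ker(\gamma_i)$. Two caveats on what you did write: the factorization $\Phi_{{\bf s}(X)}(W(X))=x^{\bf g}F(y)$ is not unique, since $B({\bf s})$ has rank two and hence the monomials $y^{\bf e}$ satisfy many multiplicative relations, so ``reading off'' the target ${\bf g}$ and $F$ is under-determined and only becomes meaningful once an explicit $V$ is fixed; and your decorations live at several vertices, whereas the lemma asserts the normal form $[V]-[(S_{i}^-)^c]$ with a single vertex $i$ (the paper achieves this uniformly by taking $V_{i_0}=0$, $V_j=\mathbb{C}$ for $j\neq i_0$, and $c=\dim\ker(\gamma_{i_0})+1$).

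The genuine gap is in the three blow-up cases, which you leave as ``analogous thin representations are to be constructed,'' and in particular in your final claim that the Jacobian relations hold for free because the support of $V$ misses a vertex of every oriented cycle. That claim is false where it matters. For ${\rm Bl}_2\mathbb{C}P^2$ the quiver contains the $3$-cycle $2\to 5\to 4\to 2$, and one can check (using that $\ker B=\{w:\sum_jw_jv_j=0\}$) that no dimension vector ${\bf e}\geq 0$ with $e_5=0$ satisfies $y^{\bf e}=y_5$, so the support of any representation realizing the required $F$-polynomial must contain vertex $5$; the analogous argument forces the support to meet that cycle fully. Consequently the Jacobian relation attached to the arrow $5\to 4$ is a nontrivial linear condition on the arrow scalars, and which subspaces are subrepresentations (hence $F_V$), as well as the ranks of the $\gamma_i$ (hence ${\bf g}(V)$), depend on how it is resolved. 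The paper's proof deals with this by decreeing $V_\alpha=0$ for every arrow $\alpha:i\to j$ that bypasses the omitted vertex $i_0$ (i.e.\ such that arrows $i\to i_0\to j$ exist) and then imposing that a sufficiently general linear combination of the scalars on the paths from $j$ to $i$ vanishes; a similar issue arises for ${\rm Bl}_3\mathbb{C}P^2$ with the arrow $3\to 6$. Without this mechanism your construction in the blow-up cases either violates the Jacobian relations or produces the wrong quiver Grassmannians, so the argument as written does not close.
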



\begin{proof} We fix a vertex $i_0$ of $Q$ arbitrarily. We then define a representation $V$ of the quiver with potential $(Q(X),S(X))$ as follows: we have $V_{i_0}$=0 and $V_i=\mathbb{C}$ for all $i\not=i_0$. Moreover, we have $V^-_i=0$ for all $i$. We have $V_\alpha=0$ if $\alpha:i\rightarrow j$ is an arrow bypassing $i_0$, that is, if there exist arrows $i\rightarrow i_0\rightarrow j$. Whenever such an arrow exists, the Jacobian relations require a certain linear combination of the maps representing the paths from $j$ to $i$ to be zero; we choose the potential sufficiently general such that this linear combination is sufficiently general. Recalling the maps $\gamma_i$ used in the definition of ${\bf g}$-vectors, we can then still choose the maps $V_\beta$ representing all other arrows (not incident with $i_0$) sufficiently general such that all $\gamma_i$ for $i\not=i_0$, which are not forced to be zero anyway, are of maximal possible rank. We moreover define $c=\dim{\rm Ker}(\gamma_{i_0})+1$.\\[1ex]
It is now just a matter of case-by-case direct calculations to verify that $P(X)=[V]-[(S_1^-)^c]$ fulfills the assumption of the lemma. We exemplify several of these calculations. We fix, for example, $i_0=1$.\\[1ex]
In the case $X=\mathbb{C}P^2$, the representation $V$ clearly fulfills the Jacobian relations since there are no consecutive arrows supporting non-zero maps in $V$. We have
$$y_1=x_2^{-3}x_3^{3},\; y_2=x_1^{3}x_3^{-3},\; y_3=x_1^{-3}x_2^{3},$$
and thus
$$F_P(y)=F_V(y)=1+y_2+y_2y_3=1+x_1^{3}x_3^{-3}+x_2^3x_3^{-3}.$$
The $\gamma$-maps are of the form $0=\gamma_2:\mathbb{C}^3\rightarrow 0$ and $0=\gamma_3:0\rightarrow\mathbb{C}^3$, so that $${\bf g}_V=(\dim{\rm Ker}(\gamma_1),2,-1),$$ and thus $${\bf g}_P=(-1,2,-1),$$ yielding
$${\rm CC}_P(x)=x_1^{-1}x_2^2x_3^{-1}(1+x_1^3x_3^{-3}+x_2^3x_3^{-3})=x_1^{-1}x_2^2x_3^{-1}+x_1^{-1}x_2^{-1}x_3^2+x_1^2x_2^{-1}x_3^{-1}.$$
This indeed coincides with $\Phi(z_1+z_2+z_1^{-1}z_2^{-1})$ since, by definition,
$$\Phi(z_1)=x_1^{-1}x_2^2x_3^{-1},\; \Phi(z_2)=x_1^{-1}x_2^{-1}x_3^2.$$

The verification in the cases  $X=\mathbb{C}P^1\times\mathbb{C}P^1$ and $X={\rm Bl}_1\mathbb{C}P^2$ is entirely similar; we explain how to derive the cluster characters. For $X=\mathbb{C}P^1\times\mathbb{C}P^1$, we have $$y_1=x_2^2x_3^{-2},\; y_2=x_1^{-2}x_4^2,\; y_3=x_1^2x_4^{-2},\; y_4=x_2^{-2}x_3^2$$ and $F_V(y)=1+y_3+y_3y_4+y_2y_3y_4$. We find $\mathbf{g}_P=(-1,1,-1,1)$, and thus $${\rm CC}_P(x)=x_1^{-1}x_2x_3^{-1}x_4+x_1x_2x_3^{-1}x_4^{-1}+x_1x_2^{-1}x_3x_4^{-1}+x_1^{-1}x_2^{-1}x_3x_4.$$
For $X={\rm Bl}_1\mathbb{C}P^2$, we have
$$y_1=x_2^{-3}x_3x_4^2,\; y_2=x_1^3x_3^{-2}x_4^{-1},\; y_3=x_1^{-1}x_2^2x_4^{-1},\; y_4=x_1^{-2}x_2x_3$$
and $F_V(y)=1+y_2+y_2y_3+y_2y_3y_4$. We find $\mathbf{g}_P=(-1,-1,1,1)$ and thus
$${\rm CC}_P(x)=x_1^{-1}x_2^{-1}x_3x_4+x_1^2x_2^{-1}x_3^{-1}+x_1x_2x_3^{-1}x_4^{-1}+x_1^{-1}x_2^{2}x_4^{-1}.$$
For the remaining two cases, we first depict representations of the relevant quivers:

\begin{center}
\begin{tikzpicture}[scale=2.0]
\draw[arrows={-angle 45}, shorten >=6, shorten <=6]  (2,0.4) to [out=170, in=290] (1,1.2);
\draw[arrows={-angle 45}, shorten >=6, shorten <=6]  (2,0.4) to [out=110, in=350] (1,1.2);
\draw[arrows={-angle 45}, shorten >=6, shorten <=6]  (1.5, -0.8) to [out=20, in=280] (2,0.4);
\draw[arrows={-angle 45}, shorten >=6, shorten <=6]  (1.5, -0.8)  -- (2,0.4);

\draw[arrows={-angle 45}, shorten >=6, shorten <=6]  (2, 0.4)--(0,0.4);
\draw[arrows={-angle 45}, shorten >=6, shorten <=6]  (1, 1.2)--(0,0.4);
\draw[arrows={-angle 45}, shorten >=6, shorten <=6]  (0,0.4)--(1.5, -0.8);
\draw[arrows={-angle 45}, shorten >=6, shorten <=6]  (0,0.4)--(0.5, -0.8);
\draw[arrows={-angle 45}, shorten >=6, shorten <=6]  (0.5, -0.8)--(1.5, -0.8);
\draw[arrows={-angle 45}, shorten >=6, shorten <=6]  (1, 1.2)--(0.5, -0.8);

\draw[arrows={-angle 45}, shorten >=6, shorten <=6] (0.5,-0.8)--(2,0.4);

    \node at(0,0.4) {$V_4$};
	\node at (2,0.4) {$V_5$};
	\node at (1,1.2) {$V_1$};
	\node at (0.5, -0.8) {$V_3$};
	\node at(1.5, -0.8) {$V_2$};
	\node at (0.5, 0.95){$\scriptstyle{M^1_1}$};
	\node at (0.75, 0.8){$\scriptstyle{M^1_2}$};
	\node at (0.085, -0.1){$\scriptstyle{M^4_1}$};
	\node at (0.45, -0.1){$\scriptstyle{M^4_2}$};
	\node at (1, -0.9){$\scriptstyle{M^3_1}$};
	\node at (2.15, -0.2){$\scriptstyle{M^2_1}$};
	\node at (1.6, -0.2){$\scriptstyle{M^2_2}$};
	\node at (1.3, 0.9){$\scriptstyle{M^5_2}$};
	\node at (1.3, 0.3){$\scriptstyle{M^5_3}$};
	\node at (1.9, 0.9){$\scriptstyle{M^5_1}$};
	\node at (1.2,-0.1){$\scriptstyle{M^3_2}$};
	
\end{tikzpicture}

\begin{tikzpicture}[scale=2.0]

\draw[arrows={-angle 45}, shorten >=6, shorten <=6]  (0, 0)--(1.5,0);
\draw[arrows={-angle 45}, shorten >=6, shorten <=6]  (0, 0)--(0.75,-0.8);
\draw[arrows={-angle 45}, shorten >=6, shorten <=6]  (0.75,-0.8)-- (1.5, 0);
\draw[arrows={-angle 45}, shorten >=6, shorten <=6]  (0.75,-0.8)-- (1.5, 0.8);
\draw[arrows={-angle 45}, shorten >=6, shorten <=6]  (1.5, 0)--(1.5, 0.8);
\draw[arrows={-angle 45}, shorten >=6, shorten <=6]  (1.5, 0)--(0.75, 1.6);
\draw[arrows={-angle 45}, shorten >=6, shorten <=6]  (1.5, 0.8)--(0.75, 1.6);
\draw[arrows={-angle 45}, shorten >=6, shorten <=6]  (1.5, 0.8)--(0, 0.8);
\draw[arrows={-angle 45}, shorten >=6, shorten <=6]  (0.75, 1.6)--(0, 0.8);
\draw[arrows={-angle 45}, shorten >=6, shorten <=6]  (0.75, 1.6)--(0, 0);
\draw[arrows={-angle 45}, shorten >=6, shorten <=6]  (0, 0.8)--(0, 0);
\draw[arrows={-angle 45}, shorten >=6, shorten <=6]  (0, 0.8)--(0.75, -0.8);

\node at (0,0) {$V_4$};
\node at (1.5, 0) {$V_5$};
\node at (0,0.8) {$V_6$};
\node at (1.5,0.8) {$V_3$};
\node at (0.75,1.6) {$V_1$};
\node at (0.75,-0.8) {$V_2$};
\node at (0.3, 1.3){$\scriptstyle{M^1_1}$};
\node at (0.65, 1.1){$\scriptstyle{M^1_2}$};
\node at (1.2, 1.3){$\scriptstyle{M^3_1}$};
\node at (0.85, 1.1){$\scriptstyle{M^5_2}$};

\node at (0.3, -0.5){$\scriptstyle{M^4_1}$};
\node at (0.65, -0.3){$\scriptstyle{M^6_2}$};
\node at (1.2, -0.5){$\scriptstyle{M^2_1}$};
\node at (0.85, -0.3){$\scriptstyle{M^2_2}$};

\node at (-0.15, 0.4){$\scriptstyle{M^6_1}$};
\node at (1.6, 0.4){$\scriptstyle{M^5_1}$};

\node at (0.75, 0.7){$\scriptstyle{M^3_2}$};
\node at (0.75, 0.1){$\scriptstyle{M^4_2}$};

\end{tikzpicture}
\end{center}

In the case $X={\rm Bl}_2\mathbb{C}P^2$, the scalar $M^5_3$ is zero by definition, and a sufficiently general linear combination of the scalars representing the paths from $4$ to $5$ is zero to satisfy the Jacobian relations. The maps $\gamma_2:V_5^2\rightarrow V_3\oplus V_4$ and $\gamma_3:V_2\oplus V_5\rightarrow V_1\oplus V_4$ are automatically zero since all paths between the respective vertices pass through vertex $1$ or through the arrow from $5$ to $4$. All non-zero scalars representing the arrows can then be chosen to be sufficiently general so that the maps $$\gamma_4:\mathbb{C}^2\simeq V_2\oplus V_3\rightarrow V_1\oplus V_5\simeq\mathbb{C}\mbox{ and }\gamma_5:\mathbb{C}\simeq V_1^2\oplus V_4\rightarrow V_3\oplus V_2^2\simeq\mathbb{C}^3$$ are of maximal rank.
The resulting ${\bf g}$-vector is thus
$${\bf g}_V=(\dim{\rm Ker}(\gamma_1),1,1,0,-1),$$
and thus
$${\bf g}_P=(-1,1,1,0,-1).$$
The rest of the verification is again straight-forward, so that both ${\rm CC}_P(x)$ and $\Phi(W(x))$ coincide with
$$x_1^{-1}x_2x_3x_5^{-1}+x_1x_2^{-1}x_4x_5^{-1}+x_1x_2^{-1}x_3^{-1}x_5+x_1^{-1}x_2x_4^{-1}x_5+x_3^{-1}x_4^{-1}x_5^2.$$

The verification in the final case $X={\rm Bl}_3\mathbb{C}P^2$ is again similar, as a direct inspection of the representation shows. This finishes the proof of the lemma.\end{proof}

We can now mutate the virtual representation $P(X)$ to all mutated quivers with potentials:

\begin{definition} For every sequence of vertices ${\bf i}=(i_1,\ldots,i_N)$ without repetition, define $P_{\bf i}(X)$ as the representation of $(Q_{\bf i}(X),S_{\bf i}(X))$ given by
$$P_{\bf i}(X)=\mu_{i_N}\cdots\mu_{i_1}P(X).$$
\end{definition}

The main result of this paper now follows immediately:

\begin{theorem}\label{main} Every Floer potential of a monotone Lagrangian torus $L_{\bf i}(X)$ appearing in a Lagrangian seed with associated LG seed $s_{\bf i}(X)$ 
 corresponds to the cluster character ${\rm CC}_{P_{\bf i}(X)}(x)$ of the virtual representation $P_{\bf i}(X)$ of $(Q_{\bf i}(X),S_{\bf i}(X))$ under the comparison map $\Phi_{{\bf s}_{\bf i}(X)}$, that is,
$$\Phi_{{\bf s}_{\bf i}(X)}(W_{L_{\bf i}(X)})={\rm CC}_{P_{\bf i}(X)}(x).$$
\end{theorem}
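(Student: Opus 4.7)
The plan is induction on the length $N$ of the mutation sequence ${\bf i}=(i_1,\ldots,i_N)$, combining all four compatibility results established above. By the corollary following Theorem \ref{mfp}, one can replace the Floer potential $W_{L_{\bf i}(X)}$ by the algebraic LG potential $W_{\bf i}(X)$ throughout, so the target equality becomes $\Phi_{{\bf s}_{\bf i}(X)}(W_{\bf i}(X))={\rm CC}_{P_{\bf i}(X)}(x)$.

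The base case $N=0$ is exactly Lemma \ref{initial}. For the inductive step, write ${\bf i}=({\bf i}',i_N)$, so by construction $W_{\bf i}(X)=\mu_{i_N}^S W_{{\bf i}'}(X)$ and $P_{\bf i}(X)=\mu_{i_N} P_{{\bf i}'}(X)$. Applying Lemma \ref{phimap} with ${\bf s}={\bf s}_{{\bf i}'}(X)$ and $i=i_N$ moves the LG mutation across the comparison map, after which the induction hypothesis applies:
$$\Phi_{{\bf s}_{\bf i}(X)}(W_{\bf i}(X))=\mu_{i_N}^C\bigl(\Phi_{{\bf s}_{{\bf i}'}(X)}(W_{{\bf i}'}(X))\bigr)=\mu_{i_N}^C\bigl({\rm CC}_{P_{{\bf i}'}(X)}(x)\bigr).$$
Lemma \ref{wk}, applied to $V=P_{{\bf i}'}(X)$, reads ${\rm CC}_{P_{{\bf i}'}(X)}(x)={\rm CC}_{P_{\bf i}(X)}(x')$; interpreting the substitution $x\mapsto x'$ as the automorphism $\mu_{i_N}^C$ of $\mathbb{C}(x_1,\ldots,x_n)$, this says ${\rm CC}_{P_{{\bf i}'}(X)}=\mu_{i_N}^C({\rm CC}_{P_{\bf i}(X)})$. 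Applying $\mu_{i_N}^C$ to both sides and using that it is an involution on the $x$-variables (a short direct check from the formulas in Section \ref{ca}) yields $\mu_{i_N}^C({\rm CC}_{P_{{\bf i}'}(X)})={\rm CC}_{P_{\bf i}(X)}$, closing the induction.

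I do not expect a genuine obstacle, as all the conceptual work has been packaged into Lemmas \ref{phimap}, \ref{wk} and \ref{initial}. The only delicate point is bookkeeping of the varying $B$-matrices across mutation steps: the automorphism $\mu_{i_N}^C$ produced by Lemma \ref{phimap} is defined using $B({\bf s}_{{\bf i}'}(X))$, and Lemma \ref{wk} must be invoked with $V=P_{{\bf i}'}(X)$ (rather than $V=P_{\bf i}(X)$) so that the substitution $x\mapsto x'$ it encodes refers to the same automorphism of $\mathbb{C}(x_1,\ldots,x_n)$ and so that the two applications compose and equate in a well-defined manner.
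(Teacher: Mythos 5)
Your proposal is correct and is essentially the paper's own argument: the paper proves Theorem \ref{main} in one sentence by the same induction (base case Lemma \ref{initial}, with both sides transforming compatibly under mutation via Lemma \ref{phimap}, Theorem \ref{mfp} and Lemma \ref{wk}). You merely spell out the details the paper leaves implicit, in particular the correct matching of the $B$-matrices in the two lemmas and the involutivity of $\mu_{i_N}^C$ needed to convert ${\rm CC}_{P_{{\bf i}'}(X)}(x)={\rm CC}_{P_{\bf i}(X)}(x')$ into the required identity, both of which check out.
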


Indeed, the claimed equality holds for the empty sequence ${\bf i}$, the right hand side is mutation invariant by Lemma \ref{wk} and the definition of the $P_{\bf i}(X)$, and the left hand side is mutation invariant by Lemma \ref{phimap}, the definition of the $W_{\bf i}(X)$, and the mutation compatibility of Floer potentials of Theorem \ref{mfp}.\\[1ex]
{\bf Remark:} The authors expect that, using \cite{TV}, explicit (graded) potentials for the above five quivers $Q(X)$ can be constructed, so that the representations $P_{\bf i}(X)$, and conjecturally also their cluster characters, can be determined more explicitly as graded representations of the $Q_{\bf i}(X)$.

\end{document}